\newtheorem{prop}{Proposition}[section]
\newtheorem{thm}[prop]{Theorem}
\newtheorem{defn-thm}[prop]{Theorem-Definition}
\newtheorem{defn-lem}[prop]{Lemma-Definition}
\newtheorem{cor}[prop]{Corollary}
\newtheorem{lem}[prop]{Lemma}
\newtheorem*{thm*}{Theorem}
\newtheorem*{cor*}{Corollary}
\theoremstyle{definition}
\newtheorem{defn}[prop]{Definition}
\newtheorem{example}[prop]{Example}
\newtheorem{rem}[prop]{Remark}
\newtheorem*{claim*}{Claim}
\newtheorem*{rem*}{Remark}
\theoremstyle{remark}
\newcommand{\su}{\subset}
\newcommand{\wt}{\widetilde}
\newcommand{\ov}{\overline}
\newcommand{\isom}{\cong}
\newcommand{\Q}{\mathbb{Q}}
\newcommand{\PP}{\mathbb{P}}
\newcommand{\F}{\mathbb{F}}
\newcommand{\OO}{\mathcal{O}}
\newcommand{\pp}{\mathfrak{p}}
\newcommand{\fq}{\mathfrak{q}}
\newcommand{\de}{\delta}
\newcommand{\De}{\Delta}
\newcommand{\ka}{\kappa}
\newcommand{\lam}{\lambda}
\begin{document}

\title{Turning non-smooth points into rational points}

\author{Cesar Hilario}
\address{Departamento de Ciencias, Pontificia Universidad Cat\'olica del Per\'u, Av. Universitaria 1801, San Miguel 15088, Lima, Peru}
\email{cesar.hilario.pm@gmail.com}


\subjclass[2010]{14G05, 14G17, 14H05, 14H45}

\dedicatory{May 19. 2026}

\begin{abstract}
In a recent paper, the author and Stöhr established a bound on the number of iterated Frobenius pullbacks needed to transform a non-smooth purely inseparable point on a regular geometrically integral curve into a rational point.
In this paper we improve this result, 
by establishing a new bound that is sharp in every characteristic $p>0$.

\end{abstract}

\maketitle

\setcounter{tocdepth}{1}

\section{Introduction}

This paper is concerned with the behavior of non-smooth points on regular curves under iterated Frobenius pullbacks.

\subsection{Regularity and smoothness}
A point $x$ on a curve%
\footnote{All our curves are assumed to be geometrically integral and projective.}
$C$ over a field $K$ is called \emph{regular} if its local ring $\OO_{C,x}$ is regular.
It is called \emph{smooth} if it is regular and, in addition, 
the preimages of $x$ under the projection map $C_{\ov K} \to C$ are regular points on the extended curve $C_{\ov K} = C \otimes_K \ov K$ over the algebraic closure $\ov K$;
see \cite[Chapt.~0, 22.5.8]{EGA4a}, \cite{Fal78}, or \cite[p.\,141]{Liu02}.

If the base field $K$ is perfect (e.g., $\mathrm{char}(K)=0$ or $K = \ov K$),
regularity and smoothness are equivalent notions.
If $K$ is imperfect, however, smoothness is stronger than regularity.
A typical example of an imperfect field is the function field of an algebraic variety over a field of positive characteristic, such as $\F_q(t)$, where $t$ is transcendental over the finite field $\F_q$.
In this context, regular curves that are non-smooth arise, making this phenomenon a distinctive feature of geometry in characteristic $p>0$.
For instance, \emph{quasi-elliptic curves} (i.e., regular non-smooth curves of genus 1) occur as generic fibres of generically non-smooth fibrations on surfaces in characteristics $p=2,3$, which is the reason why the classification of algebraic surfaces is subtler in these characteristics than in other characteristics;
see \cite{BM77,BM76} or \cite[Chapter~4]{CDL23}.

\subsection{Non-smooth points and Frobenius pullbacks}
Let $C$ be a regular curve over a field $K$ of characteristic $p>0$. 
We consider the iterated Frobenius pullbacks $C^{(p^n)}$ of $C$, which fit into an infinite sequence of relative Frobenius morphisms
\[ C^{(p^0)} = C\to C^{(p^1)} \to C^{(p^2)} \to C^{(p^3)} \to \cdots. \]
Passing to their normalizations $C_n$ we obtain a sequence of regular curves over $K$
\[ C_0 = C \to C_1 \to C_2 \to C_3 \to \cdots. \]
Let $x$ be a non-smooth point on $C$. 
The non-smoothness of $x$ means that its geometric singularity degree $\de(x) \geq 0$ is positive (see Definition~\ref{2025_05_18_21:20}).
One knows that Frobenius pullbacks turn the non-smooth point $x$ into a smooth point, that is, the image point $x_n \in C_n$ is smooth for $n$ large enough.
It was proved in \cite{HiSt22} that the non-smooth point $x$ is actually turned into a $K$-rational point, if in addition $x$ is assumed to be \emph{purely inseparable}, 
\footnote{In \cite{HiSt22} one uses the term \emph{non-decomposed}; see Definition~\ref{2025_05_18_23:00}.}
i.e., $x \in C(K^{p^{-\infty}})$, where $K^{p^{-\infty}}$ is the maximal purely inseparable extension of $K$.
Purely inseparable points, also known in the literature as \emph{perfect}, have been studied in recent years by many authors \cite{Ghi10,Ros15,Ros20,Yuan21,BL22,Amb23}.


\begin{defn}
Let $C|K$ be a regular curve equipped with a purely inseparable point $x$. We call $(C|K,x)$ a \emph{pair}.
\end{defn}

Given a pair $(C|K,x)$ with $x$ a non-smooth point,
the bound $m$ such that $x_n \in C_n(K)$ for all $n\geq m$ depends on the singularity degree $d=\de(x)>0$ and the characteristic $p>0$, but not on the curve $C$ nor on the field $K$ (see \cite[Theorem~1.1]{HiSt22}).
In characteristic $p=2$ the bound is sharp (see \cite[Proposition~2.25]{HiSt22}), that is, for each integer $d>0$ the bound $m$ is optimal in the sense that there is a pair $(C|K,x)$ with $\de(x) = d$ such that $x_{m-1} \notin C_{m-1}(K)$.

In characteristic $p>2$ there is less flexibility, because on the one hand the singularity degrees $\de(x)$ are multiples of $\frac{p-1}2$ (see \cite[Proposition~2.5]{HiSt22}), while on the other hand
not every such multiple can occur as the singularity degree $\de(x)$ of a pair $(C|K,x)$.
We show (see Proposition~\ref{2023_07_14_13:25}) that a positive integer $d$ is \textit{admissible}, i.e., there exists a pair $(C|K,x)$ with $\de(x) = \frac{p-1}2 {\cdot} d$, 
if and only if the congruence relation $d \not \equiv -1 \pmod p$ is satisfied.
The goal of this paper is to determine a sharp bound for every admissible $d>0$.

\begin{thm}\label{2025_05_22_11:10}
Let $(C|K,x)$ be a pair with $\de(x) = \frac{p-1}2 {\cdot}d > 0$. 
Then $x_n \in C_n$ is a $K$-rational point for all $n \geq \lam_p \big( d \big)$, where $\lam_p(d)$ is the integer determined in Theorem~\ref{2022_07_24_00:25}.

Furthermore the bound $\lam_p(d)$ is sharp for $d>0$, that is, 
there exists a pair $(C'|K',x')$ with $\de(x') = \frac{p-1}2 {\cdot}d$,
such that $x'_n \in C'_n$ is not a $K'$-rational point for $n=\lam_p(d)-1$.
\end{thm}



One application of results like Theorem~\ref{2025_05_22_11:10} is a method to achieve explicit descriptions of regular non-smooth curves $C|K$  (see \cite[p.\,281]{HiSt22}). 
To explain how the method works, suppose $C$ is geometrically rational, i.e., the base change $C_{\ov K}$ has geometric genus $\ov g = h^1(\OO_{\tilde C_{\ov K}}) = 0$, where $\tilde C_{\ov K}$ is the normalization of $C_{\ov K}$. This implies that the regular curve $C_n$ has genus $0$ for large $n$. 
Thus if some non-smooth point $x$ on $C$ is purely inseparable,
\footnote{The existence of non-smooth purely inseparable points on $C$ can be assumed without loss of generality, by passing, if necessary, to the regular curve $C_L = C \otimes_K L$, where $L$ is the separable closure of $K$.}
the theorem yields an integer $n>0$ such that $x_n$ is a rational point on the genus zero curve $C_n$, hence $C_n \isom \PP^1(K)$. 
Now, starting with this explicit description of $C_n$ one can successively obtain explicit descriptions of the regular curves $C_{n-1},C_{n-2},\dots,C$, by applying an algorithm due to Bedoya and St\"ohr \cite{BedSt87}.

The above strategy was applied in \cite[Section~2]{HiSt23} to get a full description of all quasi-elliptic curves.
It has also been applied to obtain a complete classification of all regular geometrically rational plane quartic curves in characteristic two (see \cite{HiSt24}).

\begin{rem}
The sharp bounds $\lam_p(d)$, defined as the smallest $m$ such that for all pairs $(C|K,x)$ with $\de(x) = \frac{p-1}2{\cdot}d$ one has $x_n \in C_n(K)$ for all $n\geq m$, depend on the collection of pairs $(C|K,x)$ considered, 
e.g., restricting the collection of pairs may lead to larger sharp bounds.
In addition, the sharp bounds depend solely on the properties of the non-smooth points $x$, i.e., they are independent of the properties of the regular curves $C|K$ at their points $y \neq x$.
It is reasonable to expect that the same sharp bounds $\lam_p(d)$ will hold if we impose suitable \emph{global} restrictions on the pairs $(C|K,x)$, where by \emph{global} we mean conditions not involving the point $x$. We check that this is the case for the restrictions $\ov g = h^1(\OO_{\tilde C_{\ov K}}) = 0$ and $\mathrm{pdeg}(K)>1$ (see Remarks~\ref{2026_05_18_18:30} and~\ref{2026_05_18_17:45} below). We expect the same to be true for other global restrictions such as requiring that $x$ be the only non-smooth point on $C$, or that the extended curve $C_{\ov K}$ have a fixed positive geometric genus $\ov g$.

\end{rem}

\subsection{Outline of proof of the theorem}
To compute the sharp bounds $\lam_p(d)$ we construct a class of pairs $(C|K,x)$ whose image points $x_n \in C_n$ are not $K$-rational for suitable $n$.
This yields a lower bound for $\lam_p(d)$ (we have an upper bound by \cite[Theorem~1.1]{HiSt22}), which, after a careful analysis, leads to the actual value of $\lam_p(d)$.
A key ingredient in our approach is the duality between regular curves and function fields
(see \cite[7.4]{EGA2}, or \cite[II.2.5]{Sil09} for the case where the base field is perfect),
which allows us to investigate the problem in the arithmetic setting of function field theory.

\begin{rem}\label{2026_05_18_18:30}
A by-product of our construction is a subclass of pairs $(C|K,x)$ that realize the sharp bounds $\lam_p(d)$.
While explicit descriptions of each point $x$ and its images $x_n$ are given, we have no a priori control over the remaining points on the curve $C$, hence it is hard to describe its global geometry, e.g., compute the genus $g=h^1(\OO_C)=h^1(\OO_{C_{\ov K}})$.
These curves, however, share a common feature: they are all geometrically rational, i.e., each base change $C_{\ov K}$ has geometric genus $\ov g=h^1(\OO_{\tilde C_{\ov K}})=0$.
\end{rem}

\begin{rem}\label{2026_05_18_17:45}
Some of the pairs $(C|K,x)$ we construct exist only over imperfect base fields $K$ whose $p$-degree $\mathrm{pdeg}(K) = \log_p [K:K^p]>0$ is at least $2$ (see Remark~\ref{2025_05_21_23:20}).
This implies that the same sharp bounds $\lam_p(d)$ hold if we only consider pairs $(C|K,x)$ with $\mathrm{pdeg}(K)>1$. It is not clear what happens when $\mathrm{pdeg}(K)=1$.
\end{rem}



\subsection{Arbitrary non-smooth points}
For points that are not purely inseparable our sharp bounds $\lam_p(d)$ are related to the notion of a separable point.
Given a point $x$ on a regular curve $C|K$, one knows (see \cite[Corollary~2.16]{HiSt22}) that the separable degree $[\ka(x):K]_s$ of the residue field extension $K \su \ka(x)$ divides the integer $\frac 2 {p-1} {\cdot} \de(x)$.

\begin{thm}[see Theorem~\ref{2025_05_22_11:15}]
Let $C|K$ be a regular curve. 
Let $x$ be an (arbitrary) non-smooth point on $C$ with $\de(x) = \frac{p-1}2 {\cdot} [\ka(x):K]_s {\cdot} d > 0$.
Then the image point $x_n \in C_n$ is separable, i.e., $x_n \in C_n(K^{sep})$, for all $n \geq \lam_p (d)$.

Furthermore $\lam_p(d)$ is sharp with respect to this property, that is, there exist a regular curve $C'|K'$ and a non-smooth point $x'$ on $C'$ with $\de(x') = \frac{p-1}2 {\cdot} [\ka(x'):K]_s {\cdot} d > 0$, such that $x'_n \in C_n'$ is not separable for $n=\lam_p(d)-1$.
\end{thm}

The theorem generalizes Theorem~\ref{2025_05_22_11:10}, since a point $x$ is purely inseparable if and only if $[\ka(x):K]_s = 1$, and moreover the image point $x_n$ is rational if and only if it is both separable and purely inseparable.


\section{Preliminaries}

By a \emph{curve} over a field $K$ we mean a proper geometrically integral $K$-scheme of dimension $1$.
A curve is called \emph{regular} (or \emph{normal}) if its local rings are regular local rings.

As follows from \cite[7.4]{EGA2}, there is a one-to-one contravariant correspondence between the regular curves $C$ over a field $K$ and the \emph{one-dimensional separable function fields} $F|K$, given by the assignment $C|K \mapsto F|K = K(C)|K$.
A one-dimensional separable function field is a separable field extension $F|K$ of transcendence degree one, with $K$ algebraically closed in $F$.
Note that if $K \su K'$ is an algebraic extension then $FK'|K' = F \otimes_K K' | K'$ is also a one-dimensional separable function field; the corresponding regular curve over $K'$ is the normalization of the base change $C_{K'} = C \otimes_K K'$.

In the function field setting the (regular) closed points on the curve $C|K$ are called the \emph{primes} of the function field $F|K$,
and they are denoted by the letters $\pp$, $\fq$, etc. 
The local rings $\OO_\pp$ of the primes $\pp$ of $F|K$ are the (discrete) valuation rings of $F|K$.


\begin{defn}\label{2025_05_18_21:20}
The \emph{geometric singularity degree} $\de(\pp)$ of a prime $\pp$ is the $\ov K$-codimension of the semilocal domain $\OO_\pp\otimes_K \ov K \su F \otimes_K \ov K$ in its integral closure.
Equivalently, let $\pi :\tilde C_{\ov K} \to C_{\ov K}$ be the normalization morphism of the extended curve $C_{\ov K} = C \otimes_K \ov K$ over $\ov K$; then $\de(\pp)=\sum_y \de(y)$, where $y$ runs through the points on $C_{\ov K}$ that lie over $\pp \in C$ and $\de(y)$ is the $\ov K$-dimension of the stalk $(\pi_* \OO_{\tilde C_{\ov K}} / \OO_{C_{\ov K}})_y$.
\end{defn}

A prime $\pp$ is called \emph{singular} if $\de(\pp)>0$, i.e., $\pp$ is non-smooth as a point on $C|K$.
Singular primes exist only over base fields $K$ that are imperfect, in particular only if the characteristic $p$ of $K$ is positive.
We assume throughout that $p>0$.

Let $F|K$ be a (one-dimensional separable) function field. The Frobenius pullback $C^{(p)}|K$ of the corresponding regular curve $C|K$ is a curve that may not be regular. 
Passing to its normalization we get a regular curve $C_1|K$ with function field $F_1|K := F^p K |K$.
We call $F_1|K$ the \emph{Frobenius pullback} of $F|K$.
Iterating this process we get iterated Frobenius pullbacks $F_n|K = F^{p^n}K|K$, which are the function fields of the regular curves $C_n|K = \wt{C^{(p^n)}}|K$.
The composite $F_n=F^{p^n} K$ is the only intermediate field of $F|K$ such that the extension $F_n \su F$ is purely inseparable of degree $p^n$.

Given a prime $\pp$, there is an associated discrete valuation $v_\pp$ of $F|K$. Restricting $v_\pp$ to $F_n|K$ and normalizing we obtain a discrete valuation that corresponds to a prime $\pp_n$ of $F_n|K$. We call $\pp_n$ a \emph{restricted prime}, or the \emph{restriction} of $\pp$ to $F_n|K$.

By the Fundamental Equality for function fields, the product between the ramification and inertia indices $e_{\pp|\pp_1}$ and $f_{\pp|\pp_1}$ of each extension $\pp|\pp_1$ equals $[F:F_1]=p$.
If $e_{\pp|\pp_1}=1$, i.e., if the purely inseparable extension $\ka(\pp_1) \su \ka(\pp)$ has degree $p$, we say that $\pp|\pp_1$ is \emph{unramified} or \emph{inertial};
if $e_{\pp|\pp_1}=p$, i.e., $\ka(\pp_1) = \ka(\pp)$, we say that $\pp|\pp_1$ is \emph{ramified}.

By \cite[Proposition~2.9]{HiSt22}, a prime $\pp$ is \emph{separable}, i.e., the residue field extension $K \su \ka(\pp)$ is separable, i.e., $\pp \in C(K^{sep})$, if and only if $\pp$ is non-singular and the extension $\pp|\pp_1$ is ramified.
In particular singular primes do not exist if $K$ is perfect.
Moreover, every prime $\pp$ that is rational, i.e., $\ka(\pp)=K$, is also non-singular.

\begin{defn}\label{2025_05_18_23:00}
A prime $\pp$ is called \emph{purely inseparable} (or \emph{non-decomposed}) if it satisfies the following three equivalent conditions (see \cite[Corollary~2.17]{HiSt22})
\begin{enumerate}[\upshape (i)]
    \item there is a unique prime in the extended function field $F \ov K | \ov K$ that lies over $\pp$, i.e., there is a unique point in the extended curve $C_{\ov K}$ that lies over $\pp \in C$;
    \item \label{2025_05_18_23:02}
    the restricted prime $\pp_n$ is rational for some $n$;
    \item \label{2025_05_18_23:03}
    the residue field extension $K \su \ka(\pp)$ is purely inseparable, i.e., $\pp \in C(K^{p^{-\infty}})$.
\end{enumerate}
\end{defn}
Note that if some restricted prime $\pp_n$ is rational then so is each $\pp_m$ for $m\geq n$, since 
$K \su \cdots \su \ka(\pp_2) \su \ka(\pp_{1}) \su \ka(\pp)$.
Clearly $\pp$ is purely inseparable if and only if some (and every) restricted prime $\pp_n$ is purely inseparable.

The local invariants (in particular the singularity degree) of any purely inseparable prime $\pp$ can be calculated from the Bedoya--Stöhr algorithm~\cite{BedSt87}.
We remark that the key condition for the algorithm is item~\ref{2025_05_18_23:02} in Definition~\ref{2025_05_18_23:00} (see \cite[p.\,313]{BedSt87}), although in \cite{BedSt87} one always takes for granted the stronger assumption that $K$ is separably closed.

\begin{thm}[{\cite[Theorem 2.3]{BedSt87}}] \label{2024_01_21_02:30}
Let $\pp$ be a purely inseparable prime in a one-dimensional separable function field $F|K$.
Choose $n>0$ such that the restricted prime $\pp_n$ 
of $F_n|K$ is rational.
Let $z\in F$ be a function such that $\OO_\pp = \OO_{\pp_1}[z]$.
Then
\[ \de(\pp) = p \, \de(\pp_1) + \frac{p-1}2 \cdot v_{\pp_n} (dz^{p^n}). \]
\end{thm}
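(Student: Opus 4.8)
The plan is to pass to the algebraic closure $\ov K$ and reduce the whole statement to a length computation for power series rings, then exploit the factorization of the purely inseparable extension $F|F_n$ into degree-$p$ steps, but carry out the decisive calculation only for the single bottom step. Concretely, I would set $A_i = \OO_{\pp_i}\otimes_K \ov K$, a subring of $\ov{F_i} = \Frac(F_i \otimes_K \ov K)$, and write $\wt{A_i}$ for its normalization, a discrete valuation ring since $\pp_i$ is non-decomposed (unique prime above). By definition of the geometric singularity degree, $\de(\pp_i) = \dim_{\ov K}(\wt{A_i}/A_i)$. Since $\OO_\pp = \OO_{\pp_1}[z]$ and $z^p \in F_1$, base change gives $A_0 = A_1[z] = \bigoplus_{j=0}^{p-1} A_1 z^j$, free of rank $p$ over $A_1$. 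All colengths below are finite, so I may replace the rings by their completions, in which $\wt{A_1} = \ov K[[s]]$ for a uniformizer $s$ of $\pp_1$ over $\ov K$.

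Next I would split the colength through the intermediate ring $\wt{A_1}[z]$, using the chain $A_0 = A_1[z] \su \wt{A_1}[z] \su \wt{A_0}$, where the outer ring is the normalization of $A_0$ and the middle ring is integral over $A_0$ with fraction field $\ov F$. Additivity of length yields $\de(\pp) = \dim_{\ov K}\big(\wt{A_0}/\wt{A_1}[z]\big) + \dim_{\ov K}\big(\wt{A_1}[z]/A_1[z]\big)$. Because $\wt{A_1}[z] = \bigoplus_{j=0}^{p-1}\wt{A_1} z^j$ and $A_1[z] = \bigoplus_{j=0}^{p-1} A_1 z^j$ are free of rank $p$ over $\wt{A_1}$ and $A_1$ respectively, the second summand equals $p\cdot \dim_{\ov K}(\wt{A_1}/A_1) = p\,\de(\pp_1)$. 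This accounts for the leading term, and reduces the theorem to identifying the first summand with $\tfrac{p-1}2 v_{\pp_n}(dz^{p^n})$.

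For the first summand I would compute the normalization of $\wt{A_1}[z] = \ov K[[s]][z]$, where $w := z^p \in \ov K[[s]]$. Writing $w = g(s)^p + h(s)$ with $g(s)^p$ collecting the $s$-powers of exponent divisible by $p$ (legitimate over the perfect field $\ov K$), the substitution $z' = z - g(s)$ gives $z'^p = h(s) = s^{m'}\cdot(\text{unit})$, where $m'$ is the least exponent of $w$ not divisible by $p$; this $m'$ is finite and prime to $p$ precisely because $z^p = w$ is not a $p$-th power, i.e. $z$ is a separating variable. After absorbing the unit by an $m'$-th root change of uniformizer (possible since $p \nmid m'$), the ring becomes the cusp $\ov K[[s,z']]/(z'^p - s^{m'})$, whose normalization $\ov K[[\tau]]$ realizes the numerical semigroup $\langle p, m'\rangle$; hence the first summand equals the number of its gaps, $\tfrac{(p-1)(m'-1)}2$.

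It remains to show $m' - 1 = v_{\pp_n}(dz^{p^n})$, and this is the step I expect to be the crux, since it is where the hypothesis that $\pp_n$ be rational is indispensable and where the geometric computation over $\ov K$ must be matched with the arithmetic differential over $K$. Writing $w_i = z^{p^i}$ and $m_i$ for the least $p$-indivisible exponent in the expansion of $w_i$ at $\pp_i$ over $\ov K$, I would first establish the \emph{level-invariance} $m_i = m_{i+1}$: over the perfect field $\ov K$ one has $\ov{F_{i+1}} = \ov{F_i}^{\,p}$, so the Frobenius isomorphism sends a uniformizer $s_i$ of $\pp_i$ to a uniformizer $s_{i+1} = s_i^{\,p}$ of $\pp_{i+1}$ and carries the expansion of $w_i$ into that of $w_{i+1} = w_i^{\,p}$ by raising each coefficient to the $p$-th power, leaving the support of exponents unchanged. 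Thus $m' = m_1 = m_n$. Finally, since $\pp_n$ is rational, a uniformizer $t$ and the expansion $z^{p^n} = \sum_i a_i t^i$ with $a_i \in K$ are already defined over $K$ and unaffected by base change, so $v_{\pp_n}(dz^{p^n}) = v_{\pp_n}\big(\sum_i i\,a_i\, t^{i-1}\,dt\big) = m_n - 1$, the differential being taken in $\Omega_{F_n|K}$ relative to the separating variable $z^{p^n}$. Combining the two summands gives $\de(\pp) = p\,\de(\pp_1) + \tfrac{p-1}2\, v_{\pp_n}(dz^{p^n})$, as claimed.
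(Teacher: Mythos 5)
You should first be aware that the paper does not prove this statement at all: it is imported verbatim from Bedoya--St\"ohr \cite{BedSt87}*{Theorem 2.3} and used as a black box. So there is no internal proof to measure you against; your argument stands as an independent, self-contained proof, and after checking it I believe it is correct. The architecture is sound: the splitting $\de(\pp)=\dim_{\ov K}\big(\wt{A_0}/\wt{A_1}[z]\big)+\dim_{\ov K}\big(\wt{A_1}[z]/A_1[z]\big)$ through the intermediate ring $\wt{A_1}[z]$, the freeness argument giving the term $p\,\de(\pp_1)$, the reduction of the first summand to the cusp $z'^p=s^{m'}$ with gap count $\tfrac{(p-1)(m'-1)}{2}$, the Frobenius level-invariance $m_i=m_{i+1}$ (which works because $\ov{F_{i+1}}=\ov{F_i}^{\,p}$, so Frobenius carries $\wt{A_i}$ isomorphically onto $\wt{A_{i+1}}$ and preserves the support of exponents), and the identification $v_{\pp_n}(dz^{p^n})=m_n-1$ at the rational prime: each step checks out, and together they give exactly the stated formula.

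That said, you silently use three facts that are true but deserve justification, since they are where the actual content of the hypotheses enters. (i) You assert that $\wt{A_i}$ is a DVR ``since $\pp_i$ is non-decomposed.'' Non-decomposedness gives a unique point of $C_{\ov K}$ over $\pp_i$, but unibranchness of that point is a further claim; it follows because $v_{\pp}$ extends uniquely to $\ov F$: factor $\ov F|F$ as $F\su F{\cdot}K^{sep}\su \ov F$, note that $\ka(\pp)\otimes_K K^{sep}$ is a field (purely inseparable tensor separable), so there is a unique prime upstairs in the separable step, and the step $\ov F|F{\cdot}K^{sep}$ is purely inseparable. (ii) You claim $m'<\infty$ ``because $z$ is a separating variable.'' Separability gives $z^p\notin \ov{F_1}^{\,p}$, but what you need is $z^p\notin \ov K[[s]]^p$, i.e.\ that no defect appears after completion. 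This is where finiteness of the normalization must be invoked: $\wt{A_0}$ is finite and free of rank $p$ over the DVR $\wt{A_1}$, so $ef=p$ with $f=1$, forcing $e=p$; equivalently, analytic reducedness of the excellent local ring $\wt{A_1}[z]$ rules out $(z-g)^p=0$ in the completion. (iii) When you equate $m_n$, produced by iterating $s_{i+1}=s_i^p$ from a uniformizer $s_1$ at level $1$, with the least $p$-indivisible exponent of the $t$-expansion at the rational prime $\pp_n$, you are using that this exponent is independent of the chosen uniformizer. This is true (by the chain rule, $dw/ds'=(dw/ds)\cdot(ds/ds')$ with $ds/ds'$ a unit), but it is cleaner to remove the issue entirely by defining $s_1:=t^{1/p^{n-1}}$, the unique $p^{n-1}$-st root of $t$ in $\ov{F_1}=\ov{F_n}^{\,1/p^{n-1}}$, so that $s_n=t$ exactly. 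With these three points made explicit, your proof is complete.
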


Here $v_{\pp_n} (dz^{p^n})$ is the order of the exact differential $dz^{p^n}$ of $F_n|K$ at the rational prime $\pp_n$.
Note that a function $z$ with $\OO_\pp = \OO_{\pp_1}[z]$ always exists:
if the extension $\pp|\pp_1$ is ramified, one can take $z$ to be a local parameter at $\pp$;
if $\pp|\pp_1$ is inertial one can select a function $z\in \OO_\pp$ whose residue class $z(\pp)\in \ka(\pp)$ generates the degree $p$ purely inseparable extension $\ka(\pp_1)\su\ka(\pp)$.

The algorithm is applied backwards: starting with $\de(\pp_n)=0$, the theorem yields successively $\de(\pp_{n-1}),\de(\pp_{n-2}),\dots,\de(\pp)$; see Section~\ref{2024_02_13_22:30} for explicit applications.

\begin{cor} \label{2025_06_17_19:00}
Assumptions as in Theorem~\ref{2024_01_21_02:30}.
Then $2\de(\pp) \not \equiv 1 \pmod p$.
\end{cor}

\begin{proof}
Let $t\in F_n$ be a local parameter at the rational prime $\pp_n$.
Write $z^{p^n} \in F_n$ as a Laurent series in $t$ with coefficients in $K$, say $z^{p^n}=\sum_i a_i t^i$.
Then $d z^{p^n}=\sum_i i a_i t^{i-1} dt$,
and since we work in characteristic $p>0$ we conclude $v_{\pp_n}(dz^{p^n}) \not \equiv -1 \pmod p$.
\end{proof}

\section{The sharp bound} \label{2023_07_05_21:20}

We start by fixing some terminology and notation.

\begin{defn}
Let $F|K$ be a one-dimensional separable function field.
Assume that there is a purely inseparable prime $\pp$ in $F|K$.
We call $(F|K,\pp)$ a \emph{pair}.
\end{defn}

By \cite[Proposition~2.5]{HiSt22}, the geometric singularity degree $\de(\pp)$ of a pair $(F|K,\pp)$ is a multiple of $\frac{p-1}2$, that is, $\de(\pp) = \frac{p-1}2 {\cdot}d$ for some integer $d$. 
We say that a positive integer $d$ is \textit{admissible} if there exists a pair $(F|K,\pp)$ such that $\de(\pp) = \frac{p-1}{2} {\cdot} d$.

\begin{prop}\label{2023_07_14_13:25}
    A positive integer $d$ is admissible if and only if $d\not\equiv -1 \pmod p$.
\end{prop}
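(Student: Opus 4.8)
The plan is to prove both implications from one elementary observation: at a rational prime, the order of a nonzero exact differential is never $\equiv -1 \pmod p$. Concretely, if $\pp_n$ is a rational prime of $F_n|K$ with local parameter $t$ and $w \in F_n$ is a separating variable, then expanding $w$ as a Laurent series $\sum_i a_i t^i$ with $a_i \in K$ gives $dw = \bigl(\sum_i i a_i t^{i-1}\bigr)\,dt$. As $w$ is separating we have $dw \neq 0$, and since $dt$ generates the differentials near $\pp_n$ the order $v_{\pp_n}(dw)$ equals $i_0 - 1$, where $i_0$ is least with $p \nmid i_0$ and $a_{i_0} \neq 0$; because $p \nmid i_0$ this is $\not\equiv -1 \pmod p$.

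For the \textbf{necessity} of the congruence, let $\pp$ be non-decomposed with $\de(\pp) = \frac{p-1}{2}d$. Choose $n > 0$ with $\pp_n$ rational and a function $z$ with $\OO_\pp = \OO_{\pp_1}[z]$, so that Theorem~\ref{2024_01_21_02:30} yields $\de(\pp) = p\,\de(\pp_1) + \frac{p-1}{2} v_{\pp_n}(dz^{p^n})$. Writing $\de(\pp_1) = \frac{p-1}{2} d_1$ and dividing by $\frac{p-1}{2}$ gives $d = p d_1 + v_{\pp_n}(dz^{p^n})$, hence $d \equiv v_{\pp_n}(dz^{p^n}) \pmod p$. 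Since $z^{p^n}$ is a separating variable of $F_n|K$, the observation above forces $v_{\pp_n}(dz^{p^n}) \not\equiv -1$, and therefore $d \not\equiv -1 \pmod p$.

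For \textbf{sufficiency} I would exhibit an explicit example realizing any $d$ with $p \nmid (d+1)$. Fix $a \in K \setminus K^p$ (possible as $K$ is imperfect), take $F_1 = K(s)$ with the rational prime $\pp_1 \colon s = 0$, and set $w = a + s^{d+1}$. The congruence $p \nmid (d+1)$ has two effects: it places $w$ outside $F_1^p = K^p(s^p)$, so that $F := F_1(z)$ with $z^p = w$ is purely inseparable of degree $p$ over $F_1$; and it makes $dw = (d+1)s^d\,ds$ have order exactly $d$ at $\pp_1$. Over $\ov K$, setting $z' = z - a^{1/p}$ gives $z'^p = s^{d+1}$, which (as $\gcd(p,d+1)=1$) presents $F_{\ov K}$ as a rational function field; in particular $F \otimes_K \ov K$ is a domain, so $F|K$ is a function field with $F_1 = F^p {\cdot} K$.

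Let $\pp$ be the prime of $F$ over $\pp_1$. Its value $w(\pp_1) = a \notin K^p$ gives $z(\pp) = a^{1/p} \notin K$, so $\ka(\pp) = K(a^{1/p})$ is purely inseparable of degree $p$ over $\ka(\pp_1) = K$; thus $\pp$ is non-decomposed and inertial over $F_1$, and $z$ is a unit whose value generates the residue extension, whence $\OO_\pp = \OO_{\pp_1}[z]$. Applying Theorem~\ref{2024_01_21_02:30} with $n = 1$ and $\de(\pp_1) = 0$ (as $\pp_1$ is rational, hence smooth) gives $\de(\pp) = \frac{p-1}{2} v_{\pp_1}(dw) = \frac{p-1}{2}d$, so $d$ is admissible. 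I expect the main obstacle to be this sufficiency direction: one must confirm that the construction genuinely uses the imperfection of $K$ --- that $\pp$ is a bona fide singular, non-decomposed prime rather than a reparametrized rational point, which is exactly what $a \notin K^p$ prevents --- and must verify the identity $\OO_\pp = \OO_{\pp_1}[z]$ needed to invoke Theorem~\ref{2024_01_21_02:30}.
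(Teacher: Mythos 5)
Your proposal is correct and follows essentially the same route as the paper: your sufficiency construction ($z^p = a + s^{d+1}$ with $a \in K \setminus K^p$, taking the prime over the zero of $s$ and applying Theorem~\ref{2024_01_21_02:30} with $n=1$) is exactly the paper's example, just with more of the routine verifications (geometric integrality, identification of the Frobenius pullback, $\OO_\pp=\OO_{\pp_1}[z]$) written out. The only divergence is in the necessity direction, where the paper simply cites \cite[Corollary~2.4]{BedSt87} (the congruence $2\de(\pp)\not\equiv 1 \pmod p$), while you re-derive that congruence from Theorem~\ref{2024_01_21_02:30} together with the correct observation that the order of a nonzero exact differential at a rational prime is never $\equiv -1 \pmod p$ --- a self-contained unpacking of the cited fact rather than a genuinely different argument.
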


\begin{proof}
Suppose that $d\not\equiv -1 \pmod p$. 
The function field $F|K=K(x,z)|K$ given by 
\[ z^p = a + x^{d+1}, \quad \text{where $a\in K\setminus K^p$} \] 
has Frobenius pullback $F_1|K = K(x)|K$.
Let $\pp$ be the zero of $x$, i.e., let $\pp$ be the only prime such that $v_\pp(x)>0$.
This prime is purely inseparable, since its restriction $\pp_1$ is the rational $x$-adic valuation of $F_1|K$.
As the residue class $z(\pp)\in \ka(\pp)$, which equals $a^{1/p}$, does not belong to $\ka(\pp_1)=K$ it follows that $\pp$ is inertial over $F_1$ (i.e., $\pp|\pp_1$ is inertial) with residue field $\ka(\pp) = K(z(\pp)) = K(a^{1/p})$.
As the differential $dz^p = (d+1) x^d dx$ of $F_1|K$ has order $d$ at $\pp_1$, we conclude from Theorem~\ref{2024_01_21_02:30} that
\[ \de(\pp) = p\, \de(\pp_1) + \frac{p-1}2 \cdot v_{\pp_1}(dz^p) = \frac{p-1}2 \cdot d.   \]
Thus $d$ is admissible, as desired.
The converse follows from Corollary~\ref{2025_06_17_19:00}.
\end{proof}

\begin{rem}
Let $F|K=K(x,z)|K$ be the function field in the above proof.
Let $\pp'$ be the pole of $x$, i.e., $\pp'$ is the only prime with $v_{\pp'}(x)<0$.
By the Jacobian criterion every prime different from $\pp$ and $\pp'$ is non-singular.
The prime $\pp'$ is also non-singular, actually rational, because $v_{\pp_1'}(z^p)=-(d+1)$ is not divisible by $p$ and so the extension $\pp'|\pp_1'$ is ramified.
Thus $\pp$ is the only singular prime of $F|K$.
Since the rational Frobenius pullback $F_1|K$ has genus $g_1=0$ the extended function field $F\ov K | \ov K$ has genus $\ov g = 0$, hence we conclude from \cite[Formula~2.3]{HiSt22} that the function field $F|K$ has genus $g=\frac{p-1}2 {\cdot} d$.
Note that for $p>2$, $d=1$, this recovers the curve considered in the proof of \cite[Lemma~8.13]{IIL20}.

\end{rem}

For each positive integer $d$ that is admissible
we define
\begin{align*}
    \tau_p(d) &= 
    \begin{cases} 
        \lceil \log_p((p-1)d + 1) \rceil & \text{if $d$ is a sum of consecutive $p$-powers,}  \\
        \lceil \log_p((p-1)d + 1) \rceil - 1 & \text{otherwise,}
    \end{cases}
    \\
    &= 
    \begin{cases} 
        i+1 & \text{if $d=P^i_j$ for some $j \leq i$,}  \\
        i & \text{if $P^{i-1}_0 < d < P_0^i$ and $d \neq P_j^i$ for all $j$ with $j \leq i$,}
    \end{cases}
\end{align*}
where for every pair of integers $i \geq j \geq 0$ the symbol $P_j^i$ denotes 
\[ P_j^i = p^j + \dots + p^i = \sum_{k=j}^i p^k = \frac{p^{i+1} - p^j}{p-1}. \]

\begin{thm}[{\cite[Theorem~2.24]{HiSt22}}] \label{2025_05_18_14:15}
Let $(F|K,\pp)$ be a pair such that $\de(\pp) = \frac{p-1}2 {\cdot} d > 0$.
Then the restricted prime $\pp_n$ is rational for all $n \geq \tau_p (d)$.
\end{thm}

Our goal in this paper is to find a bound similar to $\tau_p(d)$ that is sharp in every characteristic $p>0$.
%
%
In other words, given an admissible integer $d>0$,
we ask for the smallest integer $m$ such that
for every pair $(F|K,\pp)$ with $\de(\pp) = \frac{p-1}2 {\cdot}d$ the restricted prime $\pp_n$ is rational for all $n \geq m$.
We denote the smallest such $m$ by $\lam_p(d)$.
Now Theorem~\ref{2025_05_18_14:15} rephrases as
\begin{equation}\label{2024_05_18_11:40}
    1 \leq \lam_p(d) \leq \tau_p(d).
\end{equation}
The theorem below gives the precise value of $\lam_p(d)$.

\begin{thm}\label{2022_07_24_00:25}
Let $d$ be a positive integer such that $d \not\equiv -1 \pmod p$.
Choose $0\leq r < p-1$ such that $d\equiv r \pmod p$.
Then $\lam_p(d)=\tau_p(d)$ or $\lam_p(d)=\tau_p(d)-1$,
the latter case occurring
if and only if 
there is an integer $i>1$ such that 
$P_0^{i-1} < d < (r+1) P_0^{i-1}$ and
\[ d\neq r P_0^{i-1} + P_j^{i-1} \quad \text{for all $j$ with $0<j\leq i$}.\]
\end{thm}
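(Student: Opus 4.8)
The plan is to determine exactly when the upper bound $\tau_p(d)$ can be improved by one, and to show that no further improvement is ever possible. The proof naturally splits into two parts: an \emph{upper-bound} part (showing $\lam_p(d)\leq \tau_p(d)-1$ under the stated arithmetic conditions) and a \emph{lower-bound} part (exhibiting curves that force $\pp_n$ to remain non-rational, so that $\lam_p(d)\geq \tau_p(d)-1$ always, and $\lam_p(d)\geq\tau_p(d)$ when the conditions fail). Because Theorem~\ref{2023_07_14_13:30} already gives $\lam_p(d)\leq\tau_p(d)$, the entire content is to sharpen this by one in the right regime and to certify sharpness elsewhere.

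For the upper bound, the strategy is to analyze the descent of the singularity degrees $\de(\pp_n)$ through the sequence $F=F_0\supset F_1\supset\cdots$ using the partial differences $\De_n=\de(\pp_n)-\de(\pp_{n+1})$, which satisfy $\De_{n+1}\leq p^{-1}\De_n$ and sum to $\de(\pp)=\frac{p-1}2 d$. First I would translate the residue-at-$\pp_n$ computation of Theorem~\ref{2024_01_21_02:30} (Bedoya--St\"ohr) into a statement about which partitions $d=\sum_n \tfrac{2}{p-1}\De_n$ are realizable, writing each term in base $p$. The condition $P_0^{i-1}<d<(r+1)P_0^{i-1}$ with $d\equiv r\pmod p$ should pin down the number $i$ of genuinely nonzero steps, while the exclusion $d\neq rP_0^{i-1}+P_j^{i-1}$ for $0<j\leq i$ rules out precisely those $d$ whose only admissible partition is the maximal ``sum of consecutive $p$-powers'' type that forces an extra Frobenius pullback. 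The key arithmetic lemma will be that whenever these inequalities and non-equalities hold, one can redistribute the differences $\De_n$ so that $\de(\pp_{\tau_p(d)-1})=0$, i.e.\ rationality is already achieved one step early.

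For the lower bound I would reuse and refine the explicit construction from the admissibility proof of Proposition~\ref{2023_07_14_13:25}: curves of the form $z^p=a+\text{(polynomial in }x)$, or iterated versions thereof, chosen so that the prescribed sequence of singularity degrees is realized on the nose and the final rational step occurs no earlier than $\tau_p(d)-1$. The point is to build, for each admissible $d$, a non-decomposed prime whose difference sequence $(\De_n)$ is as ``spread out'' as the arithmetic permits, thereby certifying $\lam_p(d)\geq\tau_p(d)-1$; and when $d$ is a sum of consecutive $p$-powers or satisfies $d=rP_0^{i-1}+P_j^{i-1}$, to show every realizing construction needs the full $\tau_p(d)$ steps, giving $\lam_p(d)=\tau_p(d)$.

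The main obstacle I anticipate is the combinatorial bookkeeping in the upper-bound part: characterizing exactly which difference sequences $(\De_n)$ with $\De_{n+1}\leq p^{-1}\De_n$ and $\sum_n\tfrac{2}{p-1}\De_n=d$ are actually attainable by some function field, and then solving the resulting base-$p$ optimization to locate the earliest vanishing of $\de(\pp_n)$. The inequality $\De_{n+1}\leq p^{-1}\De_n$ is a carry-type constraint, so the analysis will hinge on understanding the base-$p$ digits of $d$ and $r$, and on recognizing that the excluded values $d=rP_0^{i-1}+P_j^{i-1}$ are exactly the boundary cases where the only legal sequence is the maximal one. Verifying that the construction and the optimization match up at these boundaries — so that the ``if and only if'' is tight and the improvement is always exactly $1$, never more — is where the real care is required.
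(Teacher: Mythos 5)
Your overall architecture---explicit examples to certify $\lam_p(d)\geq\tau_p(d)-1$ in general (and $\geq\tau_p(d)$ in the sharp cases), plus an analysis of the difference sequence $\De_n=\de(\pp_n)-\de(\pp_{n+1})$ for the improved upper bound---does match the paper's, but the proposal stops exactly where the mathematical content begins, and it contains two quantifier errors that would derail execution. For the upper bound ($\lam_p(d)<\tau_p(d)$ under the stated conditions) you must prove a \emph{universal} statement: every non-decomposed prime with $\de(\pp)=\frac{p-1}{2}d$ has $\pp_{\tau_p(d)-1}$ rational. Your phrase ``one can redistribute the differences $\De_n$ so that $\de(\pp_{\tau_p(d)-1})=0$'' describes choosing a favorable realization, which proves nothing about an arbitrary prime. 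The paper instead assumes some prime has $\pp_{i-1}$ non-rational, invokes the characterization of sharpness via partitions realizing the combinatorial maximum (Remark~\ref{2023_09_14_17:15}, imported from the proof of \cite[Proposition~2.19]{HiSt22}), and then---this is the key idea your plan lacks---brings in \emph{geometric} constraints beyond the carry inequality $\De_{n+1}\leq p^{-1}\De_n$: rationality of $\pp_i$ forces $\deg\pp_{i-1}=p$, hence $p\mid d_k$ for $k<i$ and $d_i=r$; and Lemma~\ref{2022_07_24_00:30}, proved via a ramification inequality from \cite{HiSt23}, gives $\deg\pp_{i-j}=p^j$, hence $p^j\mid d_k$ for $k\leq i-j$. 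These divisibility constraints, invisible to a pure base-$p$ analysis of the inequality $\De_{n+1}\leq p^{-1}\De_n$, are what pin $d$ down to $rP_0^{i-1}+P_j^{i-1}$. You explicitly defer this (``characterizing exactly which difference sequences are actually attainable'') as ``the main obstacle I anticipate,'' i.e.\ the decisive step is left unsolved.

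In the lower-bound part you write that in the sharp cases one should ``show every realizing construction needs the full $\tau_p(d)$ steps.'' This is the wrong quantifier and is in fact false: sharpness $\lam_p(d)=\tau_p(d)$ requires only \emph{one} prime with $\pp_{\tau_p(d)-1}$ non-rational, and there typically exist primes with the same $\de$ that become rational earlier. For instance, for $d=1+p$ (a sum of consecutive $p$-powers, so $\tau_p(d)=2$ is sharp), the curve $z^p=a+x^{d+1}$ from the proof of Proposition~\ref{2023_07_14_13:25} yields a prime with $\de(\pp)=\frac{p-1}{2}(1+p)$ whose restriction $\pp_1$ is already rational. Moreover, your proposed examples ``$z^p=a+\text{polynomial}$, or iterated versions thereof'' do not suffice to cover all admissible $d$: the paper needs two genuinely different families (Example~\ref{2022_07_19_19:00}, an unramified tower, and Example~\ref{2022_07_19_18:55}, whose middle portion is totally ramified), together with the arithmetic of Lemma~\ref{2022_07_26_22:25} showing that every $d$ with $rP_0^i+P_1^i-2p<d$ is realized by one of the two shapes \eqref{2022_07_20_01:50} or \eqref{2022_07_20_01:45}. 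So while your outline points in the right direction, it neither constructs the examples needed for the lower bounds nor supplies the degree/divisibility mechanism that makes the upper bound work.
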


Note that for $i<j$ we follow the convention that $P_j^i = 0$.
As an immediate consequence we retrieve \cite[Proposition~2.25]{HiSt22}.

\begin{cor}
If a positive integer $d$ satisfies $d\equiv 0 \pmod p$ then the bound $\tau_p(d)$ is sharp for $d$, i.e., $\lam_p(d) = \tau_p(d)$.
In particular, in characteristic $p=2$
the bound $\tau_p(d)$ is always sharp.
\end{cor}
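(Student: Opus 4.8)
The goal is to prove Corollary~\ref{2023_09_14_18:15}, which asserts that when $d \equiv 0 \pmod p$ the bound $\tau_p(d)$ is sharp, and that in characteristic $p=2$ the bound is always sharp. The plan is to deduce this directly from Theorem~\ref{2022_07_24_00:25} by showing that the condition for strict inequality $\lam_p(d) < \tau_p(d)$ can never be met in these cases.

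First I would treat the case $d \equiv 0 \pmod p$. Here the chosen residue $r$ with $0 \leq r < p-1$ and $d \equiv r \pmod p$ is simply $r = 0$. I would then examine the window in Theorem~\ref{2022_07_24_00:25}: the criterion for $\lam_p(d) < \tau_p(d)$ requires an integer $i > 1$ with
\[ P_0^{i-1} < d < (r+1) P_0^{i-1} = P_0^{i-1}. \]
Since $r = 0$ gives $(r+1)P_0^{i-1} = P_0^{i-1}$, the open interval $(P_0^{i-1}, P_0^{i-1})$ is empty, so no such $d$ exists. Hence the ``only if'' direction of the theorem forces $\lam_p(d) = \tau_p(d)$, establishing sharpness for every $d$ divisible by $p$.

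For the characteristic $p=2$ statement I would observe that the admissibility condition $d \not\equiv -1 \pmod p$ becomes $d \not\equiv 1 \pmod 2$, i.e.\ $d$ is even, which is precisely $d \equiv 0 \pmod 2 = \pmod p$. Thus every admissible $d$ in characteristic $2$ already falls under the first case, and sharpness follows from the argument just given. I expect no genuine obstacle here: the whole corollary is a formal unravelling of the interval condition in the main theorem, with the key observation being that $r=0$ collapses the relevant open interval to the empty set. The only point requiring mild care is confirming that $r=0$ is the correct choice of residue representative (it is, since $0 \leq 0 < p-1$ whenever $p > 1$), and that the admissibility hypothesis in characteristic $2$ genuinely coincides with divisibility by $p$.
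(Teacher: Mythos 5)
Your proposal is correct and matches the paper's intent: the corollary is stated there as an immediate consequence of Theorem~\ref{2022_07_24_00:25}, and your unravelling (with $d\equiv 0 \pmod p$ one has $r=0$, so the window $P_0^{i-1}<d<(r+1)P_0^{i-1}$ is empty, and in characteristic $2$ admissibility $d\not\equiv -1 \pmod 2$ forces exactly this case) is precisely that deduction. Combining the failure of the criterion with the standing inequality $\lam_p(d)\leq\tau_p(d)$ gives equality, as you argue.
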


The proof of the theorem requires some preparation. 
In view of \eqref{2024_05_18_11:40}, a first step consists in bounding $\lam_p(d)$ from below.
For this we need explicit examples of pairs $(F|K,\pp)$ with the property that the restricted primes $\pp_n$ are not rational for suitable $n$.
We postpone the discussion of such examples to the next section, and use them here to prove the theorem.

\begin{prop}\label{2025_12_10_12:40}
$ $
\begin{enumerate}[\upshape (i)]
    \item \label{2025_12_10_12:41}
    Let $i\geq j>0$, $\ell\geq 0$ and $r\in\{0,\dots,p-2\}$ be integers. Assume that $d = r P_0^i + \ell P_j^i$ is positive, so in particular $d$ is admissible. Then $i<\lam_p(d)$.
    \item \label{2025_12_10_12:42}
    Let $i > j>0$, $\ell\geq 0$ and $r,r'\in\{0,\dots,p-2\}$ be integers. Assume that     $d = r P_0^i + P_j^i + r' p^j + \ell p^{j+1}$ is positive, so $d$ is admissible. Then $i<\lam_p(d)$.
\end{enumerate}
\end{prop}

The proposition follows immediately from Examples~\ref{2022_07_19_19:00} and~\ref{2022_07_19_18:55}.
It provides lower bounds for $\lam_p(d)$ in specific cases.
We use these specific bounds to derive a general bound for $\lam_p(d)$.


\begin{lem}\label{2022_07_26_22:25}
Let $d$ be a positive integer such that $d\equiv r \pmod p$, where $0 \leq r<p-1$.
Suppose that 
$ r P_0^i + P_1^i - 2p < d$
for some integer $i>0$.
Then $i<\lam_p(d)$.
\end{lem}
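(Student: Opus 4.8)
The plan is to bound $\lam_p(d)$ from below by exhibiting a bad example. By the very definition of $\lam_p(d)$, to prove $i<\lam_p(d)$ it suffices to produce a single function field $F|K$ together with a non-decomposed prime $\pp$ such that $\de(\pp)=\frac{p-1}2 d$ and the restricted prime $\pp_i$ is \emph{non}-rational: for then $m=i$, and a fortiori every $m\le i$, fails the defining property of $\lam_p(d)$, forcing $\lam_p(d)\ge i+1$. The source of such examples will be the two families constructed in Examples~\ref{2022_07_19_19:00} and~\ref{2022_07_19_18:55}, both of which realize a prescribed singularity degree with $\pp_i$ non-rational. Thus the whole argument reduces to a purely arithmetic task: writing $d$ in one of the two shapes $rP_0^i+\ell P_j^i$ or $rP_0^i+P_j^i+r'p^j+\ell p^{j+1}$ admitted by those examples, for suitable parameters $0<j\le i$, $\ell\ge 0$ and $r'\in\{0,\dots,p-2\}$.

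First I would set $e:=d-rP_0^i$ and record its basic properties. Since $P_0^i\equiv 1\pmod p$ and $d\equiv r\pmod p$, we have $p\mid e$. Rewriting the hypothesis $rP_0^i+P_1^i-2p<d$ as $e>P_1^i-2p=P_2^i-p$ and combining with $p\mid e$ and $p\mid P_2^i$ yields the sharper bound $e\ge P_2^i$; in particular $e\ge 0$. It then remains to express $e$ itself as $\ell P_j^i$ (feeding Example~\ref{2022_07_19_19:00}) or as $P_j^i+r'p^j+\ell p^{j+1}$ (feeding Example~\ref{2022_07_19_18:55}), since adding back $rP_0^i$ restores $d$ and does not disturb the residue $r$.

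The choice of $j$ will be governed by the $p$-adic valuation $v:=v_p(e)$. If $v\ge i$—including $e=0$, in which case $r\ge 1$ because $d>0$—then $p^i\mid e$, so $e=\ell P_i^i$ with $\ell\ge 0$ and Example~\ref{2022_07_19_19:00} applies with $j=i$. If instead $1\le v\le i-1$, I would take $j=v$ and set $s:=(e-P_j^i)/p^j$. The bound $e\ge P_2^i$ gives $e\ge P_j^i$, so that $s$ is a non-negative integer; the only delicate point is $j=1$, where one uses that $e\ge P_2^i$ together with $v_p(e)=1\ne v_p(P_2^i)=2$ forces $e\ge P_1^i$. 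Reducing modulo $p$ gives $s\equiv e/p^j-P_0^{i-j}\equiv e/p^j-1\pmod p$, and since $p\nmid e/p^j$ we obtain $s\not\equiv -1\pmod p$; writing $s=\ell p+r'$ then yields $r'\in\{0,\dots,p-2\}$ and $\ell\ge 0$, so Example~\ref{2022_07_19_18:55} applies. In every case we obtain the required prime, whence $i<\lam_p(d)$.

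The heart of the matter, and the step I expect to be the main obstacle, is this representation of $e$ in exactly one of the two available shapes. What makes it go through is the interplay between the threshold $P_2^i$ and the valuation: the bound $e\ge P_2^i$ is precisely strong enough to exclude the single awkward value ($e=P_2^i$ with valuation $1$) that would otherwise break the $j=1$ case, while the congruence $s\not\equiv -1\pmod p$ demanded by Example~\ref{2022_07_19_18:55} turns out to be automatic from the choice $j=v_p(e)$. Verifying these two compatibilities, rather than any geometry, is where the real work lies.
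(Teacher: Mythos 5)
Your proposal is correct and takes essentially the same route as the paper: both arguments reduce the lemma, via the definition of $\lam_p(d)$ and Examples~\ref{2022_07_19_19:00} and~\ref{2022_07_19_18:55}, to writing $e=d-rP_0^i$ either as $\ell p^i$ or as $P_j^i+r'p^j+\ell p^{j+1}$ with $0<j<i$, where in both proofs the relevant $j$ is in effect $v_p(e)$. The only presentational difference is how non-negativity of the coefficients is secured --- the paper feeds the hypothesis inequality directly into the congruence dichotomy, while you first upgrade it to $e\geq P_2^i$ (and to $e\geq P_1^i$ in the delicate case $v_p(e)=1$) using divisibility by $p$ --- but this is the same argument in substance.
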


\begin{proof}
In view of Proposition~\ref{2025_12_10_12:40},
it is enough to show that $d$ can be written as
\begin{equation}\label{2022_07_20_01:50}
    d= r P_0^{i} + \ell p^{i} \quad \text{for some $\ell\geq 0$,}
\end{equation}
or as
\begin{equation}\label{2022_07_20_01:45}
    d = r P_0^{i} + P_j^{i} + r' p^j + \ell p^{j+1} 
    \,\,\, \text{for some $\ell \geq 0$, $r' \in \{ 0,\dots,p-2 \}$ and $j$ with $0 < j < i$.}
\end{equation}
Note first that 
$-p^i \leq P_1^i - 2p < d - r P_0^i$.
Moreover for each $r' \in \{0,\dots,p-2\}$ and $j$ with $0<j<i$ one has
\[ -p^{j+1} \leq P^i_j -2 p^j - P^i_j - r' p^j \leq P^i_1 -2 p - P^i_j - r' p^j < d - r P_0^i - P_j^i - r' p^j.  \]
Thus, if $d$ does not admit 
a representation as in \eqref{2022_07_20_01:45}, i.e.,
$ d - r P_0^i - P_j^i - r' p^j \not \equiv 0 \pmod{p^{j+1}}$ for each $r' \in \{0,\dots,p-2\}$ and $j$ with $0<j<i$,
then
\[ d - r P_0^{i} \not\equiv (r'+1) p^j \pmod{p^{j+1}} \quad \text{for each $j=0,\dots,i-1$ and $r'=0,\dots,p-2$,} \]
which means that $d - r P_0^{i} \equiv 0 \pmod{p^{i}}$, i.e., 
$d$ admits a representation as in \eqref{2022_07_20_01:50}.
\end{proof}

Given a prime $\pp$, recall (see \cite[pp.\,284-285]{HiSt22}) that the sequence of singularity degrees $\de(\pp_n)$ is non-decreasing and converges to zero rapidly in the sense that the partial differences
\[\De_n = \De_n(\pp) := \de(\pp_n) - \de(\pp_{n+1}) \geq 0 \]
satisfy $\De_{n+1} \leq p^{-1} \De_n$. In turn $\de(\pp) = \De_0 + \De_1 + \De_2 + \cdots$.
Moreover each $\De_n$ is a multiple of $\frac{p-1}2$.

\begin{lem} \label{2022_07_24_00:30}
Let $(F|K,\pp)$ be a pair.
Let $n > 0$ be such that $\pp_{n}$ is rational.
Assume that
\[ \De_{i-1}(\pp) = \frac{p-1}{2} \cdot r p^{n-i} \quad \text{for every $i=1,\dots,n$},  \]
where $0 < r < p-1$.
Then the extension $\pp|\pp_n$ is totally inertial, i.e., $[\ka(\pp):\ka(\pp_n)] = [F:F_n] = p^n$.
In particular the degree $\deg(\pp) = [\ka(\pp):K]$ of $\pp$ is equal to $p^{n}$.
\end{lem}

\begin{proof}
We apply descending induction.
The extension $\pp_{n-1}|\pp_n$ is inertial
because $\pp_{n}$ is rational and $\de(\pp_{n-1}) = \De_{n-1}(\pp) > 0$.
Suppose that $\pp_{i}|\pp_n$ is totally inertial for some $i<n$, 
and let us check that $\pp_{i-1}|\pp_i$ is inertial.
If this were false, 
then \cite[Lemma~3.2]{HiSt23} would imply $\de(\pp_{i-1}) - p \, \de(\pp_{i}) \geq \frac{p-1}2 \cdot \deg(\pp_{i})$, i.e., $r \geq p^{n-i}$, a contradiction.
\end{proof}

\begin{rem}\label{2023_09_14_17:15}
According to \cite{HiSt22} the integer $\tau_p(d)$ admits the combinatorial description
\begin{equation}\label{2025_11_10_02:35}
    \tau_p(d) = \max\{ s + \min\{v_p(d_1),\dots,v_p(d_s)\} \},
\end{equation}
where $v_p$ is the $p$-adic valuation of $\Q$ and the maximum is taken over all the partitions
\[ d = d_1 + \dots + d_s \]
of $d$ such that 
\[ d_{i+1} \leq p^{-1} d_i \, \, \text{ for each $i=1,\dots,s-1$}. \]
Every pair $(F|K,\pp)$ with $\de(\pp) = \frac{p-1}2 {\cdot} d$ yields a partition $d = d_1 + \dots + d_s$ of $d$, where $d_i = \frac 2{p-1} {\cdot} \De_{i-1}(\pp)$ and $s$ is the smallest integer such that $\pp_s$ is non-singular.
By the proof of \cite[Proposition~2.22]{HiSt22}, in this case the restricted prime $\pp_n$ is rational for all $n\geq s + \min\{ v_p (d_1), \dots, v_p (d_s) \}$.
Therefore, the bound $m=\tau_p(d)$ in Theorem~\ref{2025_05_18_14:15} is sharp for $d$, i.e., $\lam_p(d) = \tau_p(d)$, if and only if the maximum in \eqref{2025_11_10_02:35} is attained by a partition $d = d_1 + \dots + d_s$ 
that comes from a pair $(F|K,\pp)$ with the property that $\pp_{m-1}$ is non-rational.

\end{rem}

Now we have all the ingredients needed to prove Theorem~\ref{2022_07_24_00:25}.

\begin{proof}[Proof of Theorem~\ref{2022_07_24_00:25}]
Recall that $1 \leq \lam_p(d) \leq \tau_p(d)$.
If $d=P_j^i$ for some $i\geq j\geq0$ then the bound $\tau_p(d)=i+1$ is sharp for $d$, i.e., $\lam_p(d)=\tau_p(d)$, as follows from Proposition~\ref{2025_12_10_12:40}~\ref{2025_12_10_12:41}.
Thus we may assume that there is an integer $i>0$ such that the following holds
\[ P_0^{i-1} < d < P_0^i \quad \text{and} \quad \text{$d\neq P_j^i$ for all $j\leq i$}. \]
We wish to see when the bound $\tau_p(d)=i$ is sharp for $d$.
As the bound is sharp when $\tau_p(d)=1$ we can further assume $i > 1$.
In addition, since the bound is sharp when $(r+1)P_0^{i-1} \leq d < P_0^i$, as
this implies by Lemma~\ref{2022_07_26_22:25} that $i-1<\lam_p(d)$,
we may suppose that
\[ P_0^{i-1} < d < (r+1) P_0^{i-1}, \]
and in turn that $r>0$, $p>2$.
Therefore, we must prove that in this situation
\begin{enumerate}[\upshape (i)]
    \item the bound $\tau_p(d) = i$ is sharp if and only if $d$ can be written as $d= r P_0^{i-1} + P_j^{i-1}$ for some $j$ with $0<j\leq i$;
    \item if the bound $\tau_p(d) = i$ is not sharp then $\lam_p(d) = \tau_p(d) - 1$.
\end{enumerate}

We prove (i). As the if part follows from Proposition~\ref{2025_12_10_12:40}~\ref{2025_12_10_12:41} we just need to show the only if part.
For any partition
$ d = d_1 + \dots + d_s $
of $d$
we have $s+\min\{v_p(d_1),\dots,v_p(d_s)\}=s$,
since $r>0$,
hence in light of Remark~\ref{2023_09_14_17:15} 
the condition that the bound $\tau_p(d)=i$ is sharp means that there exist a partition
$ d = d_1 + \dots + d_i $
of length $i$ and a pair $(F|K,\pp)$ such that
\[ \text{$\pp_i$ is rational} \quad \text{and} \quad \De_{k-1}(\pp) = \frac{p-1}{2} \cdot d_{k} \text{ for all $k = 1,\dots,i$}. \]
It is clear that
\[ p^{i-k} \leq d_k < p \cdot p^{i-k} \quad \text{for all $k=1,\dots,i$}, \]
since $d_{k+1} \leq p^{-1} d_k$ for each $k$ and $d_1<p^i$ as $(r+1)P_0^{i-1} \leq (p-1)P_0^{i-1} < p^i$.
As $\pp_{i}$ is rational and $\de(\pp_{i-1})=\De_{i-1}(\pp) > 0$ we deduce that $\pp_{i-1}$ has degree $p$ and hence that $p$ divides $d_{k}$ for every $k<i$ (see \cite[Proposition~2.8]{HiSt22}).
Therefore $d_i=r$, so in particular
\[ r p^{i-k} \leq d_k < p \cdot p^{i-k} \quad \text{for all $k=1,\dots,i$.} \]
Choose a positive integer $j\leq i$ that is maximal with respect to the following property
\[ d_k = r p^{i-k} \quad \text{for all $k>i-j$.} \]
By Lemma~\ref{2022_07_24_00:30}, the prime $\pp_{i-j}$ has degree $p^j$,
which implies that $p^j$ divides $d_k$ for each $k \leq i-j$ (see \cite[Proposition~2.8]{HiSt22}). Now $(r+1) p^j \leq d_{i-j}$, and hence $(r+1) p^{i-k} \leq d_k$ for each $k\leq i-j$, so we conclude that the integer
\[ d - (r P_0^{i-1} + P_j^{i-1}) = \sum_{1\leq k \leq i-j} \big( d_k - (r+1) p^{i-k} \big) \]
is a non-negative multiple of $p^j$. In view of $d<(r+1)P_0^{i-1}$, this means  
$d = r P_0^{i-1} + P_j^{i-1}$.

It remains to prove (ii).
Since $1 \leq \lam_p(d) < \tau_p(d)$ 
we may assume $\tau_p(d)>2$, i.e., $i > 2$. 
As $P^{i-1}_0 < d$ we conclude from Lemma~\ref{2022_07_26_22:25} that $i-2 < \lam_p (d)$, i.e., $\tau_p(d)-2 < \lam_p (d)$.
\end{proof}


A prime $\pp$ is said to be \emph{separable} if the residue field extension $K \su \ka(\pp)$ is separable.
Recall that the separable degree $[\ka(\pp):K]_s$ of $K \su \ka(\pp)$ divides the integer $\frac 2{p-1} \de(\pp)$; see \cite[Corollary~2.16]{HiSt22}.

\begin{thm}\label{2025_05_22_11:15}
Let $F|K$ be a one-dimensional separable function field. Let $\pp$ be a (not necessarily purely inseparable) singular prime with $\de(\pp) = \frac{p-1}2 {\cdot} [\ka(\pp):K]_s {\cdot} d > 0$.
Then the restricted prime $\pp_n$ is separable for all $n \geq \lam_p (d)$.
\end{thm} 

\begin{proof}
Let $\fq$ be a (purely inseparable) prime above $\pp$ in the extended function field $FL|L$, where $L=K^{sep}$.
Its restrictions $\fq_n$ lie above the restrictions $\pp_n$ of $\pp$.
By \cite[Proposition~2.12]{HiSt22} the prime $\fq$ has $\de(\fq) = \de(\pp)/[\ka(\pp):K]_s$ and $\deg(\fq)=\deg(\pp)/[\ka(\pp):K]_s$.
Analogous equalities hold for the restricted primes $\pp_n$ and $\fq_n$. 
Thus $\pp_n$ is separable if and only if $\fq_n$ is rational.
\end{proof}

The proof shows that the integers $\lam_p(d)$ are sharp bounds in the following sense.
Let $d$ be a positive integer that is admissible.
Then $\lam_p(d)$ is the smallest integer $m$ that satisfies the following property: given a one-dimensional separable function field $F|K$ and a prime $\pp$ in $F|K$ with $\de(\pp) = \frac{p-1}2 {\cdot} [\ka(\pp):K]_s {\cdot}d$, the restricted prime $\pp_n$ is separable for all $n \geq m$.

\section{Examples} \label{2024_02_13_22:30}

This section discusses the examples of function fields and purely inseparable primes that were needed in the previous section to bound $\lam_p(d)$ from below.
To compute singularity degrees we systematically use the Bedoya--Stöhr algorithm (see Theorem~\ref{2024_01_21_02:30}).

\begin{example} \label{2022_07_19_19:00}
Let $i\geq j>0$, $r\in\{0,\dots,p-2\}$ and $\ell\geq0$.
We construct a one-dimensional separable function field $F|K$ having a purely inseparable prime $\pp$ such that
\[ \de(\pp) = \frac{p-1}2 \cdot \big( r P_0^i + \ell P_j^{i} \big), \quad \text{$\pp_i$ is non-rational}, \quad \text{$\pp_{i+1}$ is rational}. \]
Let $K$ be an imperfect field containing elements
\begin{equation}\label{2024_08_29_20:15}
    a\in K\setminus K^p \quad \text{and} \quad b\in K \setminus K^p(a^{1/p^{j}}).
\end{equation}
For instance, $K$ can be the function field of the projective plane $\PP^2(k)$ over an algebraically closed ground field $k$, or over a finite field $k=\F_q$.
Consider the function field $F|K=K(x,y)|K$ defined by the equation
\[ y^{p^{i+1}} = b^{p^j} + x^{\ell \cdot p^j} (a + x^{r+1}). \]
Set $z:= x^{-\ell} (b + y^{p^{i-j+1}})$, so that $z^{p^j} = a + x^{r+1}$ and $y^{p^{i-j+1}} = b + x^\ell z$.
The Frobenius pullbacks of $F|K$ are given by
\begin{align*}
    F_n|K &= 
    \begin{cases}
        K(x,z,y^{p^{n}})|K, & \text{if $0\leq n \leq i-j$},\\
        K(x,z^{p^{n-i+j-1}})|K, & \text{if $i-j < n < i+1$}, \\
        K(x)|K, & \text{if $n=i+1$}.
    \end{cases}
\end{align*}
Let $\pp$ be the zero of the function $x$, i.e., $v_\pp(x)>0$.
Its restriction $\pp_{i+1}$ to the rational function field $F_{i+1}|K = K(x)|K$ is the rational $x$-adic valuation.
In particular, $\pp$ is purely inseparable.

We compute the singularity degree $\de(\pp)$, and on the way check that $\pp_i$ is non-rational.
As is clear from $z(\pp)^{p^j} = a \notin K^p=\ka(\pp_{i+1})^p$, for every $n$ with $i-j < n < i+1$ the extension $\pp_n|\pp_{n+1}$ is unramified, or more precisely, the residue field extension $\ka({\pp_{n+1}}) \su \ka({\pp_n})$ is purely inseparable of degree $p$, generated by the residue class $z(\pp)^{p^{n-i+j-1}}$ of the function $z^{p^{n-i+j-1}} \in F_n$.
As the differential $dz^{p^{j}} = (r+1) x^r dx$ of $F_{i+1}|K = K(x)|K$ has order $r$ at $\pp_{i+1}$, from Theorem~\ref{2024_01_21_02:30} we infer
\[ \de(\pp_n) = p \, \de(\pp_{n+1}) + \frac{p-1}2 \cdot v_{\pp_{i+1}} (dz^{p^j}) = p \, \de(\pp_{n+1}) + \frac{p-1}2 \cdot r, \quad (i-j < n < i+1). \]
Similarly, since
\[ y(\pp)^{p^{i-j+1}} = 
\begin{cases} 
    b, & \text{if $\ell>0$},  \\
    b + a^{1/p^j}, & \text{if $\ell=0$},
\end{cases} \]
does not lie in $\ka({\pp_{i-j+1}})^p = K^p(a^{1 / p^{j-1}})$, for every $n\leq i-j$ the extension $\pp_n|\pp_{n+1}$ is unramified and
\[ \de(\pp_n) = p \, \de(\pp_{n+1}) + \frac{p-1}2 \cdot v_{\pp_{i+1}} (dy^{p^{i+1}}) = p\, \de(\pp_{n+1}) + \frac{p-1}2 \cdot (r + \ell p^{j}), \quad (0\leq n \leq i-j), \]
where the last equality is due to the fact that the differential $dy^{p^{i+1}} = x^{\ell p^j}dz^{p^j}$ of $F_{i+1}|K$ has order $\ell p^j + r$ at $\pp_{i+1}$.
Thus the $\De$-invariants of the singular prime $\pp$ are equal to
\[ \De_n(\pp) = \de(\pp_n) - \de(\pp_{n+1}) =
\begin{cases}
    \frac{p-1}2 \cdot (r + \ell) p^{i-n}, & \text{if $0\leq n \leq i-j$,} \\
    \frac{p-1}2 \cdot r p^{i-n}, & \text{if $i-j < n \leq i$,} \\
    0, & \text{if $i<n$.}
\end{cases}
\]
In particular $\de(\pp) =  \frac{p-1}2 \cdot ( r P_0^i + \ell P_j^{i} )$, as required.
Moreover,
\[ \deg(\pp_n) = 
\begin{cases}
    p^{i+1-n}, & \text{if $0\leq n \leq i+1$,} \\
    1, & \text {if $i+1 \leq n$.}
\end{cases}
\]

\end{example}

\begin{example} \label{2022_07_19_18:55}
Let $i>j>0$, $r,r'\in\{0,\dots,p-2\}$ and $\ell\geq0$.
We construct a function field $F|K$ and a purely inseparable prime $\pp$ such that
\[ \de(\pp) =  \frac{p-1}2 \cdot \big( r P_0^i + P_j^i + r' p^j + \ell p^{j+1} \big), \quad \text{$\pp_i$ is non-rational}, \quad \text{$\pp_{i+1}$ is rational}. \]
Take an imperfect field $K$ and consider the function field $ F|K = K(y,u)|K $
given by the relation
\[ z^{p^j + r + 1} = a z^{r + 1} + y^{p^{i-j} (r+1)}, \]
where $z:= u^p - y^{p\ell+r' +1}$ and $a\in K \setminus K^p$.
Let $x:=y^{p^{i-j}}/z$, so that
\[ z^{p^j} = a + x^{r+1}, \quad y^{p^{i-j}} = xz, \quad u^p = z + y^{p\ell + r' + 1}. \]
Then the Frobenius pullbacks take the form
\begin{align*}
    F_n|K &= 
    \begin{cases}
        K(x,z,y,u)|K, & \text{if $n=0$},\\
        K(x,z,y^{p^{n-1}})|K, & \text{if $1\leq n \leq i-j$},\\
        K(x,z^{p^{n-i+j-1}})|K, & \text{if $i-j < n < i+1$}, \\
        K(x)|K, & \text{if $n=i+1$}.
    \end{cases}
\end{align*}
Let $\pp$ be the (purely inseparable) zero of the function $x$.
Its restriction $\pp_{i+1}$ to the rational function field $F_{i+1}|K = K(x)|K$ is a rational prime with local parameter $x$. 
As follows from $z(\pp)^{p^j} = a \notin K^p = \ka(\pp_{i+1})^p$, 
the extension $\pp_{i-j+1}|\pp_{i+1}$ is unramified and 
\[ \de(\pp_n) = p \, \de(\pp_{n+1}) + \frac{p-1}2 \cdot v_{\pp_{i+1}} (dz^{p^j}) = p \, \de(\pp_{n+1}) + \frac{p-1}2 \cdot r, \quad (i-j < n < i+1). \]
Since $y^{p^{i-j}} = xz \in F_{i-j+1}$ is a local parameter at $\pp_{i-j+1}$ 
the extension $\pp_1|\pp_{i-j+1}$ is totally ramified,
that is, for every $n$ with $1\leq n\leq i-j$ 
the extension $\pp_n|\pp_{n+1}$ is ramified and $y^{p^{n-1}}$ is a local parameter at $\pp_n$. 
Using Theorem~\ref{2024_01_21_02:30} 
we deduce
\[ \de(\pp_n) = p \, \de(\pp_{n+1}) + \frac{p-1}2 \cdot v_{\pp_{i+1}} (dy^{p^{i}}) = p \, \de(\pp_{n+1}) + \frac{p-1}2 \cdot (p^j + r), \quad (1\leq n \leq i-j). \]
Now, because $u(\pp) = z(\pp)^{1/p}$ does not lie in $\ka({\pp_1}) = \ka(\pp_{i-j+1}) = K(z(\pp))$ 
the extension $\pp|\pp_1$ is unramified and
\[ \de(\pp) = p \, \de(\pp_1) + \frac{p-1}2 \cdot v_{\pp_{i+1}} (du^{p^{i+1}}) = p \, \de(\pp_1) +  \frac{p-1}2 \cdot \big( (r'+1) p^j + r + \ell p^{j+1} \big). \]
Putting the above together we conclude that the $\De$-invariants of the singular prime $\pp$ are given by
\[ \De_n(\pp) =
\begin{cases}
    \frac{p-1}2 \cdot ((r+1)p^i + r' p^j + \ell p^{j+1}), & \text{if $n=0$,} \\
    \frac{p-1}2 \cdot (r + 1) p^{i-n}, & \text{if $0 < n \leq i-j$,} \\
    \frac{p-1}2 \cdot r p^{i-n}, & \text{if $i-j < n \leq i$,} \\
    0, & \text{if $i<n$.}
\end{cases}
\]
Consequently $\de(\pp) = \frac{p-1}2 \cdot \big( r P_0^i + P_j^i + r' p^j + \ell p^{j+1} \big)$. Furthermore,
\[ \deg(\pp_n) = 
\begin{cases}
    p^{j+1}, & \text{if $n=0$,} \\
    p^j, & \text{if $1 \leq n\leq i-j+1$,} \\
    p^{i+1-n}, & \text{if $i-j+1 \leq n \leq i+1$,} \\
    1, & \text {if $i+1 \leq n$,}
\end{cases}
\]
and in particular the prime $\pp_i$ is not rational, as desired.
\end{example}


Both examples are essential to proving Theorem~\ref{2022_07_24_00:25}.
Indeed, the two examples yield Proposition~\ref{2025_12_10_12:40}, which in turn is used to get the lower bound of $\lam_p(d)$ in Lemma~\ref{2022_07_26_22:25}.

\begin{rem}\label{2025_05_21_23:20}
All we need in Example~\ref{2022_07_19_18:55} for the construction of $F|K$ is a base field $K$ that is imperfect, i.e., we need a base field $K$ whose $p$-degree $\mathrm{pdeg}(K) = \log_p [K:K^p]$ is positive.
By contrast, in Example~\ref{2022_07_19_19:00} we need an imperfect base field $K$ with $p$-degree at least 2; indeed, if $\mathrm{pdeg}(K)=1$ then $K=K^p(a)$ whenever $a\in K\setminus K^p$, i.e., $K^{1/p^n}=K^p(a^{1/p^{n}})$ for all $n\geq 0$ and all $a\in K\setminus K^p$, and therefore the element $b$ in \eqref{2024_08_29_20:15} cannot exist.
The author does not know if there exist function fields $F|K$ with $\mathrm{pdeg}(K)=1$ that satisfy the specified properties in this example.
\end{rem}

\begin{bibdiv}
\begin{biblist}
\bib{Amb23}{article}{
  author={Ambrosi, Emiliano},
  title={Perfect points of abelian varieties},
  journal={Compos. Math.},
  volume={159},
  date={2023},
  number={11},
  pages={2261--2278},
}

\bib{BedSt87}{article}{
  author={Bedoya, Hernando},
  author={St\"ohr, Karl-Otto},
  title={An algorithm to calculate discrete invariants of singular primes in function fields},
  journal={J. Number Theory},
  volume={27},
  date={1987},
  number={3},
  pages={310--323},
}

\bib{BM76}{article}{
  author={Bombieri, Enrico},
  author={Mumford, David},
  title={Enriques' classification of surfaces in char. $p$. III},
  journal={Invent. Math.},
  volume={35},
  date={1976},
  pages={197--232},
}

\bib{BM77}{article}{
  author={Bombieri, Enrico},
  author={Mumford, David},
  title={Enriques' classification of surfaces in char. $p$. II},
  book={ title={Complex Analysis and Algebraic Geometry}, editor={W. L. Baily}, editor={T. Shioda}, publisher={Iwanami Shoten, Tokyo}, },
  date={1977},
  pages={23--42},
}

\bib{BL22}{article}{
  author={Bragg, Daniel},
  author={Lieblich, Max},
  title={Perfect points on curves of genus 1 and consequences for supersingular K3 surfaces},
  journal={Compos. Math.},
  volume={158},
  date={2022},
  number={5},
  pages={1052--1083},
}

\bib{CDL23}{book}{
  author={Cossec, François},
  author={Dolgachev, Igor},
  author={Liedtke, Christian},
  title={Enriques Surfaces I},
  publisher={Springer Nature Singapore},
  date={2025},
  pages={xxi+681 pp.},
}

\bib{Fal78}{article}{
  author={Faltings, Gerd},
  title={Ein einfacher Beweis, dass geometrische Regularität formale Glattheit impliziert},
  journal={Arch. Math. (Basel)},
  volume={30},
  date={1978},
  number={3},
  pages={284--285},
}

\bib{Ghi10}{article}{
  author={Ghioca, Dragos},
  title={Elliptic curves over the perfect closure of a function field},
  journal={Canad. Math. Bull.},
  volume={53},
  date={2010},
  number={1},
  pages={87--94},
}

\bib{EGA2}{article}{
  author={Grothendieck, Alexander},
  title={\'El\'ements de g\'eom\'etrie alg\'ebrique. II. \'Etude globale \'el\'ementaire de quelques classes de morphismes},
  journal={Inst. Hautes Études Sci. Publ. Math.},
  date={1961},
  number={8},
  pages={222 pp.},
  label={EGA II},
}

\bib{EGA4a}{article}{
  author={Grothendieck, Alexander},
  title={\'El\'ements de g\'eom\'etrie alg\'ebrique. IV. Étude locale des schémas et des morphismes de schémas. I},
  journal={Inst. Hautes Études Sci. Publ. Math.},
  date={1964},
  number={20},
  pages={259 pp.},
  label={EGA IV$_1$},
}

\bib{HiSt22}{article}{
  author={Hilario, Cesar},
  author={St\"ohr, Karl-Otto},
  title={On regular but non-smooth integral curves},
  journal={J. Algebra},
  volume={661},
  date={2025},
  pages={278--300},
}

\bib{HiSt23}{article}{
  author={Hilario, Cesar},
  author={St\"ohr, Karl-Otto},
  title={Fibrations by plane quartic curves with a canonical moving singularity},
  journal={J. Pure Appl. Algebra},
  volume={229},
  date={2025},
  number={4},
  pages={Paper No. 107918, 24},
}

\bib{HiSt24}{article}{
  author={Hilario, Cesar},
  author={St\"ohr, Karl-Otto},
  title={Fibrations by plane projective rational quartic curves in characteristic two},
  date={2024},
  note={Preprint at \href {https://arxiv.org/abs/2409.05464}{\textsf {arXiv:2409.05464}}},
}

\bib{IIL20}{article}{
  author={Ito, Kazuhiro},
  author={Ito, Tetsushi},
  author={Liedtke, Christian},
  title={Deformations of rational curves in positive characteristic},
  journal={J. Reine Angew. Math.},
  volume={769},
  date={2020},
  pages={55--86},
}

\bib{Liu02}{book}{
  author={Liu, Qing},
  title={Algebraic geometry and arithmetic curves},
  series={Oxford Graduate Texts in Mathematics},
  volume={6},
  publisher={Oxford University Press, Oxford},
  date={2002},
  pages={xvi+576 pp.},
}

\bib{Ros15}{article}{
  author={R\"ossler, Damian},
  title={On the group of purely inseparable points of an abelian variety defined over a function field of positive characteristic},
  journal={Comment. Math. Helv.},
  volume={90},
  date={2015},
  number={1},
  pages={23--32},
}

\bib{Ros20}{article}{
  author={R\"ossler, Damian},
  title={On the group of purely inseparable points of an abelian variety defined over a function field of positive characteristic, II},
  journal={Algebra Number Theory},
  volume={14},
  date={2020},
  number={5},
  pages={1123--1173},
}

\bib{Sil09}{book}{
  author={Silverman, Joseph H.},
  title={The arithmetic of elliptic curves},
  series={Graduate Texts in Mathematics},
  volume={106},
  publisher={Springer, Dordrecht},
  date={2009},
  pages={xx+513 pp.},
}

\bib{Yuan21}{article}{
  author={Yuan, Xinyi},
  title={Positivity of Hodge bundles of abelian varieties over some function fields},
  journal={Compos. Math.},
  volume={157},
  date={2021},
  number={9},
  pages={1964--2000},
}
\end{biblist}
\end{bibdiv}

\end{document}